\newtheorem{theorem}{Theorem}[section]
\newtheorem{lemma}[theorem]{Lemma}
\theoremstyle{definition}
\theoremstyle{remark}
\numberwithin{equation}{section}
\newcommand{\pw}{\mathfrak{p}}
\begin{document}

\title{Families of Cyclic Cubic Fields}

\author{Steve Balady}
\address{Department of Mathematics, University of Maryland, College Park, MD 20742}
\email{sbalady@math.umd.edu}

\date{November 15, 2015}

\keywords{Cyclic cubic fields, fundamental units}

\begin{abstract}
We describe a procedure for generating families of cyclic cubic fields with explicit fundamental units. This method generates all known families and gives new ones.
\end{abstract}

\maketitle

In \cite{shanks}, Shanks considered what he termed the ``simplest cubic fields,'' defined as the splitting fields of the polynomials
\begin{equation} \label{sn}
S_n = X^3+(n+3)X^2+nX-1.
\end{equation}
In particular, he showed that if the square root of the polynomial discriminant is squarefree, then the roots of $S_n$ form a system of fundamental units for its splitting field. The analysis of this family was extended by Lettl \cite{lettl} and Washington \cite{wash87}. Lecacheux \cite{lecacheux}, and later Washington \cite{wash96}, discovered a second one-parameter family with a similar property: if a certain specified chunk of the polynomial discriminant is squarefree, the roots of the polynomial form a system of fundamental units. Kishi \cite{kishi} found a third such family. In the following, we show that there are many, many more families of cubics with this property. The first three sections generalize the procedure of Washington \cite{wash96} and follow the model of that paper. The last section is dedicated to examples: we exhibit a new one-parameter family and describe a method for generating arbitrarily many more.

\section{The Families} \label{families}

Let $f(n)$ and $g(n)$ be polynomials with integral coefficients, and assume that the following condition holds:
\begin{equation}
\label{miracle}
\lambda = \frac{f^3+g^3+1}{fg} \textrm{\ \ is a polynomial with integral coefficients.}
\end{equation}
Examples will be given in Section \ref{examples}. For now we remark only that this condition implies that $f|(g^3+1)$ and $g|(f^3+1)$; in particular, $f$ and $g$ have no common factors. If Condition \ref{miracle} is satisfied, the pair $(f,g)$ determines a one-parameter family of polynomials as follows:
\begin{gather*}
P_{f,g}(X) = X^3+a(n)X^2 + \lambda(n) X - 1, \textrm{ where} \\
a = 3(f^2+g^2-fg)-\lambda (f+g).
\end{gather*}

Note that $P_{f,g}$ is symmetric in $f$ and $g$, so we'll assume that $\deg{f} \leq \deg{g}$. If this inequality is strict, then $\deg{\lambda} < \deg{a}$. Together with the rational root theorem, this implies that $P_{f,g}$ is irreducible for all but a small finite list of $n \in \mathbb{Z}$. For the rest of this paper, we will make the standing assumptions that $\deg{f} < \deg{g}$ and then fix an integer $n$ for which $P_{f,g}$ is irreducible. This is practical for theoretical purposes, though we note that the case where both $f$ and $g$ are constant is also of potential interest. 

The discriminant of $P_{f,g}$ is 
\begin{equation*}
D_P = (f-g)^2(3a+\lambda^2)^2 \neq 0,
\end{equation*}
so $P_{f,g}$ determines a cyclic cubic field which we denote $K_{f,g}$ (or sometimes just $K$). Thus $P_{f,g}$ has three real roots which we denote $\theta_1, \theta_2, \theta_3$. Since the constant term of $P_{f,g}$ is a unit in $\mathbb{Z}$, these roots are units in the ring of integers $\mathcal{O}_{K_{f,g}}$.
\begin{lemma} \label{galois}
The $\mathbb{Z}_3$ action of the Galois group on the roots of $P_{f,g}$ is given by
\begin{equation*}
G(\theta) = \frac{f\theta-1}{(f^2+g^2-fg)\theta-g}.
\end{equation*}
\end{lemma}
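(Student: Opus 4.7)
The plan is to verify directly that $G$ defines a fixed-point-free permutation of the three roots of $P_{f,g}$; since $\mathrm{Gal}(K_{f,g}/\mathbb{Q})\cong\mathbb{Z}_3$, this identifies $G$ with a generator of the Galois action on the roots.

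Setting $c = f^2 + g^2 - fg$, the map $G$ corresponds to the matrix $M = \bigl(\begin{smallmatrix} f & -1 \\ c & -g\end{smallmatrix}\bigr)$, whose determinant is $-fg + c = (f-g)^2$. This is nonzero at the chosen integer $n$, since $D_P\neq 0$ forces $f(n)\neq g(n)$, so $G$ is a genuine invertible Möbius transformation. Its pole $g/c$ lies in $\mathbb{Q}$ and therefore cannot be a root of the irreducible polynomial $P_{f,g}$, so $G(\theta_i)$ is well defined at every root $\theta_i$.

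The core step is the polynomial identity
\[
(c\theta-g)^3\,P_{f,g}\!\bigl(G(\theta)\bigr) = (f\theta-1)^3 + a(f\theta-1)^2(c\theta-g) + \lambda(f\theta-1)(c\theta-g)^2 - (c\theta-g)^3 = C\cdot P_{f,g}(\theta),
\]
where $C = f^3 + acf^2 + \lambda c^2 f - c^3$ is read off from the $\theta^3$-coefficient on the left. I would prove this by matching coefficients of $\theta^0,\theta^1,\theta^2,\theta^3$, substituting the definition $a = 3c - \lambda(f+g)$ and invoking the miracle condition $\lambda fg = f^3+g^3+1$ to absorb cross-terms. Once the identity is established, plugging in $\theta=\theta_i$ yields $P_{f,g}(G(\theta_i))=0$, so $G$ permutes the root set.

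To conclude, I would check that $G$ is nontrivial on the roots. Its fixed points satisfy $c\theta^2 - (f+g)\theta + 1 = 0$, whose discriminant $(f+g)^2 - 4c$ equals $-3(f-g)^2 < 0$; since the three roots of $P_{f,g}$ are real, none can be fixed by $G$. Thus $G$ restricts to a fixed-point-free permutation of a $3$-element set, necessarily a $3$-cycle, and so coincides with a generator of the Galois group. I expect the central coefficient-matching identity to be the main obstacle: it is mechanical but lengthy, and the miracle condition must be deployed carefully in order for the nonlinear terms to collapse correctly.
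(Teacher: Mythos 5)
Your proposal is correct and follows essentially the same route as the paper: the paper's proof likewise consists of verifying by direct (``messy but straightforward'') calculation that $P_{f,g}(G(\theta))=0$ and that $G(\theta)\neq\theta$, the latter using $f\neq g$ and the reality of the roots, exactly as in your discriminant computation $(f+g)^2-4(f^2+g^2-fg)=-3(f-g)^2<0$. Your write-up merely makes the structure of that calculation more explicit (the identity $(c\theta-g)^3P_{f,g}(G(\theta))=C\,P_{f,g}(\theta)$ with $C=f^3+acf^2+\lambda c^2f-c^3$, which checks out on the Shanks case $f=0$, $g=-1$, where $C=-1$), while deferring the same coefficient-matching that the paper also leaves to the reader.
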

\begin{proof}
Assume $P(\theta) = 0$. A messy but straightforward calculation shows that $\theta \neq G(\theta)$ (here we use that $f\neq g$ and $G(\theta)$ is real) and that $P(G(\theta)) = 0$.
\end{proof}

Condition \ref{miracle} and Lemma \ref{galois} might seem a bit miraculous (or at least deeply unmotivated). How could we have guessed that Condition \ref{miracle} would lead to a cyclic cubic whose roots are units in $\mathcal{O}_K$ if we didn't already know that it did? It's possible in retrospect to intuit this from the work of Kishi \cite{kishi}, but we originally discovered it using the language of elliptic surfaces as follows.

If $X^3+aX^2+\lambda X-1$ generates a cyclic cubic field, its discriminant is a square: that is, there exists $b \in \mathbb{Q}$ such that
\begin{equation} \label{elliptic}
b^2 = 4a^3+\lambda^2a^2-18\lambda a - 4\lambda^3-27.
\end{equation}
If we fix $\lambda \in \mathbb{Q}$, Equation \ref{elliptic} defines an elliptic curve on which $(a,b)$ is a rational point. Treating $\lambda$ as a parameter in $P^1(\mathbb{C})$ gives the equation of an elliptic surface $W$. In homogeneous coordinates, $W$ is
\begin{equation*}
b^2c = 4a^3+\lambda^2a^2c-18\lambda ac^2 - (4\lambda^3+27)c^3.
\end{equation*}
This surface is birationally equivalent to $X(3)$, which is described homogeneously as
\begin{equation*}
x^3+y^3+z^3=\lambda xyz.
\end{equation*}
Explicitly, the map\footnote{The observation that $W$ is a model for $X(3)$ is contained in unpublished notes of Washington.}  from the homogeneous form of $W$ to $X(3)$ is given by 
\begin{align*}
x &= -\lambda a + b -9c, \\
y &= -\lambda a - b - 9c, \\
z &=  6a + 2\lambda^2c.
\end{align*}

The coordinates of Shanks's simplest cubics $S_n$ (described by Equation \ref{sn}) on $W$ are $[a:b:c;\ \lambda] = [n+3:n^2+3n+9:1;\ n]$. This is sent by the above map to the constant section $[x:y:z;\ \lambda]=[0:-1:1;\ n]$ on $X(3)$. We can do the same with the family of Lecacheux \cite{lecacheux} (presented here in the form given by Washington \cite{wash96}):
\begin{equation*}
L_n = X^3-(n^3-2n^2+3n-3)X^2-n^2X-1.
\end{equation*}
The $W$ coordinates of $L_n$ are
\begin{equation*}
[a:b:c;\ \lambda] = [-n^3+2n^2-3n+3:(n-1)(n^2+3)(n^2-3n+3):1;\ -n^2],
\end{equation*}
which are considerably simpler on $X(3)$:
\begin{equation*}
[x:y:z;\ \lambda] = [-1:-n:1;\ -n^2].
\end{equation*}
Kishi's \cite{kishi} family $K_n$ also takes a simple form on $X(3)$: 
\begin{equation*}
[x:y:z;\ \lambda] = [-n:-n^2-n-1:1;\ -n^3-2n^2-3n-3].
\end{equation*}
The key observation here is that on $X(3)$, each of $S_n$, $L_n$, and $K_n$ are of the form $[x:y:z;\ \lambda] = [f(n):g(n):1;\ \lambda]$ for some polynomials $f$ and $g$. Further, on $X(3)$ we can solve explicitly for $\lambda$:
\begin{equation*}
\lambda = \frac{f^3+g^3+1}{fg}.
\end{equation*}
Condition \ref{miracle} is exactly what we need to reverse this process. In this language, when $f,g,$ and $\lambda$ are polynomials with integral coefficients, $[f(t):g(t):1;\ \lambda]$ determines an algebraic curve on $X(3)$. Translating back to Weierstrass form gives a family of integral points $[a:b:1;\ \lambda]$: that is, it gives a family of polynomials $P_{f,g} = X^3 + aX^2 + \lambda X - 1$ with integral coefficients and square discriminants.

For a general $X^3+aX^2+\lambda X-1$ generating a cyclic cubic field, the Galois group is generated by the fractional linear transformation
\begin{equation*}
G=
\begin{bmatrix}
f & -h \\
(f^2+g^2-fg)/h & -g
\end{bmatrix}
\end{equation*}
for some integers $f,g,h$, since these represent all elements $G \in \textrm{PGL}_2(\mathbb{Q})$ for which $G^3 = I$. Conversely, if we fix $f,g,h$ with $fgh \neq 0$, we can reconstruct
\begin{equation*}
P_{f/h,g/h} = X^3+\frac{3(f^2+g^2-fg)-\lambda h(f+g)}{h^2}X^2+\frac{f^3+g^3+h^3}{fgh}X-1;
\end{equation*}
that is,
\begin{equation*}
f^3+g^3+h^3 = \lambda fgh.
\end{equation*}
This shows that the equivalence of $W$ with $X(3)$ is actually a consequence of the structure of the Galois action. This also shows that, unless $h=1$ and Condition \ref{miracle} is satisfied, there's no guarantee that the roots of the associated polynomial will be algebraic integers.

\section{The Discriminant}

In this section we analyze the discriminant of $K_{f,g}$. A prime number $p \neq 3$ contributes a factor of exactly $p^2$ to the discriminant if and only if $p$ ramifies (and therefore ramifies tamely) in $K_{f,g}$. If 3 ramifies, it contributes a factor of exactly $3^4$. The only primes that can ramify are those that divide the polynomial discriminant of $P_{f,g}$: that is, those dividing
\begin{equation*}
\sqrt{D_P} = (f-g)(3a+\lambda^2).
\end{equation*}
It's mostly possible to analyze the contributions of the two factors separately, following Washington \cite{wash96} or Kishi \cite{kishi}. Unfortunately this approach runs into problems in the general setting, especially for primes that divide both $fg$ and $\sqrt{D_P}$. We could call these exceptional and treat them separately for fixed $(f,g)$, but this isn't very satisfying.

\begin{wrapfigure}{r}{0.30\textwidth}
\begin{center}
\begin{tikzpicture}[node distance = 2cm, auto]
    \node (Q) {$\mathbb{Q}$};
    \node (w) [above of=Q] {$\mathbb{Q}(\omega)$};
    \node (K) [above of=Q, right of=Q] {$K_{f,g}$};
    \node (wa) [above of=K] {$\mathbb{Q}(\omega,\sqrt[3]{\alpha})$};
    \draw[-] (Q) to node {} (w);
    \draw[-] (Q) to node {} (K);
    \draw[-] (K) to node {} (wa);
    \draw[-] (w) to node {} (wa);
\end{tikzpicture}
\end{center}
\end{wrapfigure}

Instead of the direct approach we do the following. Let $\omega$ be a primitive cube root of unity. Then $P_{f,g}$ determines a Kummer extension of $\mathbb{Q}(\omega)$: that is, an extension of the form $\mathbb{Q}(\omega,\sqrt[3]{\alpha})$ for some $\alpha \in \mathbb{Q}(\omega)$. From there we will descend to $K_{f,g}$.

A Kummer generator $\alpha$ is constructed as follows: let $\theta$ be a root of $P_{f,g}$, so the other two roots are $G(\theta)$ and $G^2(\theta)$. Then 
\begin{equation*}
\alpha = (\theta + \omega G(\theta) + \omega^2 G^2(\theta))^3.\phantom{shiftingthistotheleftabit}
\end{equation*}
This gives a massive rational function in $\theta, f,$ and $g$ that we can reduce using the fact that $\theta$ satisfies $P_{f,g}(\theta) = 0$. When we reduce the numerator and denominator to degree 2 functions of $\theta$, the vast majority of the terms cancel and leave an expression in $f$ and $g$. We have that
\begin{gather*}
3a+\lambda^2 = \beta\overline{\beta}, \textrm{ where } \beta = \lambda - 3f - 3\omega (f-g),
\end{gather*}
and a long and painful calculation (made possible by PARI/GP \cite{PARI2}) shows that
\begin{equation*}
\alpha = (f+\omega g)^3(3a+\lambda^2)\beta = (f+\omega g)^3\beta^2\overline{\beta}.
\end{equation*}

If $\pw \nmid 3$ is any prime in $\mathbb{Q}(\omega)$, $\pw$ ramifies in $\mathbb{Q}(\omega,\sqrt[3]{\alpha})$ if and only if $v_\pw(\alpha)$ is not a multiple of 3: that is, if and only if $\pw$ divides $\alpha$ to a non-cube power. Since $(f+\omega g)^3$ is a perfect cube, we need to consider only those $\pw$ dividing $\beta$. Together with the knowledge of the decomposition of rational primes in $\mathbb{Q}(\omega)$, this allows us to compute $D(K_{f,g})$. We begin with the following special case.

\begin{theorem} \label{regthm}
Let $K_{f,g}$ be as in Section \ref{families}. If $3a+\lambda^2$ is squarefree, then
\begin{equation*}
D(K_{f,g}) = (3a+\lambda^2)^2.
\end{equation*}
\end{theorem}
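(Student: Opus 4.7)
The plan is to track ramification through the Kummer extension $L = \mathbb{Q}(\omega,\sqrt[3]{\alpha})/\mathbb{Q}(\omega)$ set up just before the statement, with $\alpha = (f+\omega g)^3\beta^2\overline{\beta}$ and $\beta\overline{\beta} = 3a+\lambda^2$, and then transfer the conclusion to $K_{f,g}$ through the compositum diagram. First I would extract two consequences of squarefreeness. Since $3a+\lambda^2$ is a norm from $\mathbb{Q}(\omega)$, any rational prime $p \equiv 2 \pmod 3$ is inert in $\mathbb{Q}(\omega)$ and hence appears in the norm only to even powers, so squarefreeness forbids such $p$. Moreover, if $3 \mid \lambda$ then the defining formula for $a$ forces $3 \mid a$, giving $9 \mid 3a+\lambda^2$, again contradicting squarefreeness. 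Hence every prime divisor of $3a+\lambda^2$ is $\equiv 1 \pmod 3$, and in particular $3 \nmid 3a+\lambda^2$.

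For such a split prime $p\mathcal{O}_{\mathbb{Q}(\omega)} = \mathfrak{p}\overline{\mathfrak{p}}$, squarefreeness forces $v_{\mathfrak{p}}(\beta\overline{\beta}) = 1$, so one of $v_{\mathfrak{p}}(\beta), v_{\mathfrak{p}}(\overline{\beta})$ is $1$ and the other $0$, with the roles swapped at $\overline{\mathfrak{p}}$ by complex conjugation. Since $(f+\omega g)^3$ is a cube,
\begin{equation*}
v_{\mathfrak{p}}(\alpha) \equiv 2 v_{\mathfrak{p}}(\beta) + v_{\mathfrak{p}}(\overline{\beta}) \pmod 3
\end{equation*}
lies in $\{1,2\}$, so both $\mathfrak{p}$ and $\overline{\mathfrak{p}}$ ramify in $L/\mathbb{Q}(\omega)$ by the standard Kummer criterion. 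For any prime $\mathfrak{p}$ of $\mathbb{Q}(\omega)$ not above $3$ and not dividing $\beta\overline{\beta}$, instead $v_{\mathfrak{p}}(\beta)=v_{\mathfrak{p}}(\overline{\beta})=0$, so $v_{\mathfrak{p}}(\alpha)$ is a multiple of $3$ and $\mathfrak{p}$ is unramified.

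To descend to $K_{f,g}$, I would use that $\mathbb{Q}(\omega)/\mathbb{Q}$ is unramified away from $3$ and that $L = K_{f,g}\cdot\mathbb{Q}(\omega)$ is a compositum of linearly disjoint Galois extensions. Multiplicativity of ramification indices in the two towers $\mathbb{Q} \subset \mathbb{Q}(\omega) \subset L$ and $\mathbb{Q} \subset K_{f,g} \subset L$ identifies $e_p(K_{f,g}/\mathbb{Q})$ with $e_{\mathfrak{p}}(L/\mathbb{Q}(\omega))$ for every rational prime $p \neq 3$. Combined with the previous step, the rational primes ramifying in $K_{f,g}$ are exactly the divisors of $3a+\lambda^2$; each is tamely ramified in the cyclic cubic $K_{f,g}$ and therefore contributes precisely $p^2$ to $D(K_{f,g})$, and squarefreeness packages the product into $(3a+\lambda^2)^2$.

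The main obstacle I anticipate is the prime above $3$, for which the naive Kummer criterion ``$v \equiv 0 \pmod 3$'' does not by itself guarantee unramification: an additional congruence modulo a higher power of $\mathfrak{p}_3 = (1-\omega)$ is required. I would handle this by using $3 \nmid \lambda$ (from the first paragraph) to compute $\beta$ explicitly modulo that power of $\mathfrak{p}_3$ and verify, after adjusting $\alpha$ by an appropriate cube, that the resulting element is a cube to the required precision; this gives unramification of $\mathfrak{p}_3$ in $L$, hence of $3$ in $K_{f,g}$.
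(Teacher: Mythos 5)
For every prime $p \neq 3$ your argument is correct and is essentially the paper's own: you use the Kummer generator $\alpha = (f+\omega g)^3\beta^2\overline{\beta}$ with $\beta\overline{\beta} = 3a+\lambda^2$, observe that squarefreeness forces each prime divisor of $3a+\lambda^2$ to split in $\mathbb{Q}(\omega)$ with $\{v_{\pw}(\beta),v_{\pw}(\overline{\beta})\}=\{1,0\}$, conclude $v_{\pw}(\alpha)\in\{1,2\} \bmod 3$, and descend through the compositum. Your explicit remark that inert primes cannot divide a squarefree norm is a point the paper leaves implicit, and the rest (tame ramification contributing exactly $p^2$, primes away from $3a+\lambda^2$ being unramified because $v_{\pw}(\alpha)\equiv 0 \bmod 3$) matches the paper.

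The one genuine gap is the prime $3$. You correctly identify it as the main obstacle — the naive criterion $v_{\pw_3}(\alpha)\equiv 0 \pmod 3$ does not imply unramifiedness at $(1-\omega)$ — but you then defer to an unexecuted computation, and the sketch as stated underestimates what it requires. Knowing $3\nmid\lambda$ only gives $\beta\equiv\lambda \pmod{(1-\omega)^2}$, i.e.\ $\alpha$ is a cube modulo $3$ up to cubes; the unramifiedness criterion at $(1-\omega)$ demands a congruence to a cube modulo $(1-\omega)^4$ (essentially modulo $9$), and verifying that forces you to expand $(f+\omega g)^3\beta^2\overline{\beta}$ modulo $9$, invoke the relation $f^3+g^3+1=\lambda fg$, and split into cases according to $f,g \bmod 3$. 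So "verify that the resulting element is a cube to the required precision" is precisely the nontrivial content, not a routine check. The paper avoids this local cubic-residue computation entirely: it notes that if $3$ ramifies then $3\mid D_P$, hence (given $3\nmid 3a+\lambda^2$) $f\equiv g\pmod 3$, and then rules out $f\equiv 0$ (contradicts Condition \ref{miracle}), $f\equiv 1$ (forces $3\mid\lambda$), and $f\equiv 2$ (where $P_{f,g}\equiv (X-1)(X+1)^2 \bmod 3$, so the roots are not all congruent modulo the prime above $3$ and cannot be permuted by the inertia group of a totally ramified prime). You should either carry out your local computation at $(1-\omega)$ in full, with the attendant case analysis, or substitute this more elementary global argument.
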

\begin{proof}
The prime $3$ is special, so we deal with it first. If $3|(3a+\lambda^2),$ then $3|\lambda.$ The definition of $a$ shows that $v_3(a) \geq 1$, so $v_3(3a+\lambda^2) \geq 2$ and $3a+\lambda^2$ is not squarefree. If $3\nmid 3a+\lambda^2$ and 3 ramifies in $K_{f,g},$ then $f \equiv g \mod{3}$ since $D(K_{f,g})$ divides $D_P$. If $f \equiv 0$, then $(f,g)$ does not satisfy Condition \ref{miracle}. If $f \equiv 1$, then $3|(3a+\lambda^2)$ which we already assumed it did not. If $f \equiv 2$, then
\begin{equation*}
P_{f,g} \equiv X^3 + X^2 - X - 1 \equiv (X-1)(X+1)^2 \mod{3},
\end{equation*}
so the roots are not all congruent mod the prime above 3 and therefore cannot possibly be fixed by the Galois action. So 3 is unramified and contributes nothing to $D(K_{f,g})$.

Since $|\beta|^2 = 3a+\lambda^2$, a rational prime $p \neq 3$ ramifies in $K_{f,g}$ only if $p|(3a+\lambda^2)$.
In this case, there is a $\pw$ above $p$ for which $\pw|\beta$. Since $\pw$ is unramified in $\mathbb{Q}(\omega)$ and $3a+\lambda^2$ is squarefree, 
$v_\pw(\beta) = 1, v_\pw(\overline{\beta}) = 0,$ and $v_\pw(\alpha) = 2$. Therefore $p$ ramifies in $K_{f,g}$. 
\end{proof}

Because $\alpha$ is so explicit, we can do much better. The prime $p=3$ is badly behaved, but it can be dealt with directly (as in \cite{wash96} or \cite{kishi}) when we're actually given a fixed family $(f,g)$. If a prime $p\neq 3$ ramifies in $K_{f,g}$, it divides $3a+\lambda^2$. If $p \equiv 2 \mod{3}$, $p$ is inert in $\mathbb{Q}(\omega)$. Let $p^m||\beta$; then $p^m||\overline{\beta}$, so $p^{3m}$ is a removable cube in $\alpha$. 
If $p \equiv 1 \mod{3}$, $p$ splits as $\pw\overline{\pw}$ in $\mathbb{Q}(\omega)$; we choose $\pw$ so that $\pw|\beta$. From here we have two cases: either $\overline{\pw}|\beta$ or it does not. If $\overline{\pw}\nmid\beta$ and $m$ is the exact power of $p$ dividing $3a+\lambda^2$, then above $p^m$ we must have $\pw^{2m}\overline{\pw}^m$ in $\alpha$, which is a removable cube if and only if $3|m$. This leaves us to deal with the case where $p \equiv 1 \mod{3}$ and $\overline{\pw}|\beta$. But in this case the rational prime $p=\pw\overline{\pw}$ divides $\beta=\lambda-3f-3\omega(f-g)$. Since $\{1,\omega\}$ is a $\mathbb{Z}$-basis for $\mathbb{Z}[\omega]$, we must have $p|(f-g)$. Since $f$ and $g$ have no common factors, $p\nmid f$. Substituting $f \equiv g$ in $3a+\lambda^2$ implies
\begin{equation*}
\frac{(f^3-1)^2}{f^4} \equiv 0 \mod{p},
\end{equation*}
so we conclude that $p|(f^3-1)$ and $p|(g^3-1)$, so $p|\gcd{(f^3-1,g^3-1)}$. If the polynomial resultant of $f^3-1$ and $g^3-1$ is nonzero (which seems always to be the case in examples), this last condition is satisfied only for the finite collection of $p|\textrm{res}(f^3-1,g^3-1)$. Therefore, for a family given by polynomials $f,g,$ there are only finitely many rational primes $p$ with $p|\beta$, and these must be treated individually. The remaining primes ramify if and only if they divide $3a+\lambda^2$ to a non-cube power. An example using this discussion to compute $D(K_{f,g})$ exactly for a specific family will be given in Section \ref{examples}.

\section{The Regulator and Fundamental Units}
Our standing assumption is that $\deg{f} < \deg{g}$. This implies that 
\begin{align*}
\deg{\lambda} &= 2\deg{g}-\deg{f} \textrm{ and} \\
\deg{a} &= 3\deg{g} - \deg{f}, \textrm{ so} \\
\deg{(3a+\lambda^2)} &= 4\deg{g} - 2\deg{f}.
\end{align*}
Since $\deg{a} > \deg{\lambda}$, we have
\begin{align*}
P_{f,g}(-a-1) &= -a^2+o(a^2) \textrm{ and}\\
P_{f,g}(-a+1) &= a^2+o(a^2).
\end{align*}
If $n$ is sufficiently large we can choose $\theta_1$ to be a root of $P_{f,g}$ for which $-a-1 < \theta_1 < -a+1$. Since we know the Galois action, we get bounds on all the roots:
\begin{align*}
\log{|\theta_1|} &\approx \log{a(n)} \approx \deg{a}\log{n} = (3\deg{g}-\deg{f})\log{n}, \\
\log{|\theta_2|} &= \log{\frac{f\theta_1-1}{(f^2+g^2-fg)\theta_1-g}} \approx (\deg{f}-2\deg{g})\log{n}, \\
\log{|\theta_3|} &= -\log{|\theta_1||\theta_2|} \approx -\deg{g}\log{n}.
\end{align*}
(These approximations are accurate to $o(\log{n})$.) The polynomial regulator $R_P$ is
\begin{equation*}
R_P = 
\left|\begin{vmatrix}
\log{|\theta_1|} & \log{|\theta_2|} \\ 
\log{|\theta_2|} & \log{|\theta_3|}
\end{vmatrix}\right| \approx
(7\deg^2{g} - 5\deg{g}\deg{f} + \log^2{f})\log^2{n}.
\end{equation*}

Using a result of Cusick \cite{cusick}, we can bound $R_K$, the regulator of $\mathcal{O}_{K_{f,g}}$, in terms of the discriminant $D(K_{f,g})$. If $3a+\lambda^2$ is squarefree, we can apply Theorem \ref{regthm}:
\begin{equation} \label{regbound}
R_K \geq \frac{1}{16}\log^2{(D(K_{f,g})/4)} \approx
(4\deg^2{g} - 4\deg{g}\deg{f} + \deg^2{f})\log^2{n}.
\end{equation}
Take $E_K$ to be the group of units of $K_{f,g}$ and $E_P$ to be the subgroup of $E_K$ generated by $\{\pm 1, \theta_1, \theta_2\}$ (and $\theta_3 = (\theta_1\theta_2)^{-1})$. For $\epsilon > 0$ and sufficiently large $n$,
\begin{equation*}
[E_K:E_P] = \frac{R_P}{R_K} < \frac{7\deg^2{g} - 5\deg{g}\deg{f} + \deg^2{f}}
{4\deg^2{g} - 4\deg{g}\deg{f} + \deg^2{f}} + \epsilon.
\end{equation*}
Set $\deg{f} = \rho\deg{g}$ for $0 \leq \rho < 1$; then for sufficiently small $\epsilon$,
\begin{equation*}
[E_K:E_P] < \frac{7-5\rho+\rho^2}{4-4\rho+\rho^2} + \epsilon < 3,
\end{equation*}
so $[E_K:E_P] = 1$ or $2$. But the index must be a norm from $\mathbb{Z}[\omega]$ by \cite[p. 412]{wash96}, so it can't be 2. We have proved:

\begin{theorem} \label{bigthm}
Let $K_{f,g}$ be as in Section \ref{families}. Assume that $\deg{f} < \deg{g}$, that $3a+\lambda^2$ is squarefree, and that $n$ is sufficiently large. Then $\{\theta_1, \theta_2\}$ forms a system of fundamental units for $K_{f,g}.$
\end{theorem}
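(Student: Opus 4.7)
The plan is to sandwich the index $[E_K : E_P]$ of the subgroup $E_P = \langle \pm 1, \theta_1, \theta_2\rangle$ of $E_K$ by comparing the polynomial regulator $R_P$ (computed directly from the conjugates) against a lower bound on $R_K$ coming from Cusick's inequality $R_K \geq \frac{1}{16}\log^2(D(K_{f,g})/4)$. Theorem \ref{regthm} supplies the key input $D(K_{f,g}) = (3a+\lambda^2)^2$ in the squarefree case, after which the problem reduces to checking that an explicit rational function in $\deg f$ and $\deg g$ is strictly less than $3$, and then to eliminating the possibility that the index equals $2$.

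First I would estimate the three real conjugates of $\theta_1$ as $n\to\infty$. Since $\deg a > \deg\lambda$ under the hypothesis $\deg f < \deg g$, evaluating $P_{f,g}$ at $-a(n)\pm 1$ produces values of opposite sign for large $n$, yielding a root with $\log|\theta_1| = (3\deg g - \deg f)\log n + o(\log n)$. Applying the explicit Galois map $G$ from Lemma \ref{galois} to this root gives $\log|\theta_2| = (\deg f - 2\deg g)\log n + o(\log n)$, and the constant term $-1$ of $P_{f,g}$ forces $\theta_3 = (\theta_1\theta_2)^{-1}$. Expanding the $2\times 2$ determinant of logarithms yields
\[
R_P = \bigl(7\deg^2 g - 5\deg g\,\deg f + \deg^2 f\bigr)\log^2 n + o(\log^2 n),
\]
while Cusick's bound combined with Theorem \ref{regthm} yields $R_K \geq (2\deg g - \deg f)^2 \log^2 n + o(\log^2 n)$.

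Setting $\rho = \deg f/\deg g \in [0,1)$, the leading ratio of these quantities is $(7 - 5\rho + \rho^2)/(4 - 4\rho + \rho^2)$, and the elementary identity
\[
(7 - 5\rho + \rho^2) - 3(4 - 4\rho + \rho^2) = -(2\rho - 5)(\rho - 1)
\]
is strictly negative on $[0,1)$, so for $n$ sufficiently large one obtains $[E_K : E_P] < 3$, hence $[E_K:E_P]\in\{1,2\}$. To rule out $2$, I would invoke the fact (cited from \cite{wash96}) that the index must be a norm from $\mathbb{Z}[\omega]$; since $2$ is inert in $\mathbb{Q}(\omega)$ its norm is $4$, so $[E_K:E_P] = 1$ and $\{\theta_1,\theta_2\}$ is fundamental. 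The main obstacle I anticipate is controlling the error terms in the root estimates precisely enough: the upper bound on the ratio tends to exactly $3$ as $\rho\to 1$, so the strict hypothesis $\deg f < \deg g$ is doing real work and the $o(\log n)$ slop in individual logarithms must be kept well below the slack between $(7-5\rho+\rho^2)/(4-4\rho+\rho^2)$ and $3$.
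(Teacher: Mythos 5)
Your proposal is correct and follows essentially the same route as the paper: estimate the three conjugates to get $R_P \approx (7\deg^2 g - 5\deg g\deg f + \deg^2 f)\log^2 n$, combine Cusick's inequality with Theorem \ref{regthm} to get $R_K \gtrsim (2\deg g - \deg f)^2\log^2 n$, conclude $[E_K:E_P] < 3$ since $(7-5\rho+\rho^2)/(4-4\rho+\rho^2) < 3$ for $\rho \in [0,1)$, and rule out index $2$ because $2$ is not a norm from $\mathbb{Z}[\omega]$. Your worry about the $o(\log n)$ error terms is handled automatically because $\rho$ is a fixed rational number strictly less than $1$ for a given family, so the gap below $3$ is a positive constant while the errors vanish as $n \to \infty$.
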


\section{Examples} \label{examples}

The machinery of the previous sections applies to all published families of cubic polynomials whose roots form systems of fundamental units. The data of these families are summarized in the following table.

\small
\begin{center}
\begin{tabular}{r|r|r|r|r|r}
 & $f$ & $g$ & $\lambda$ & $3a+\lambda^2$ & Ref. \\
\hline
$S_n$ & 0 & $-1$ & $n$ & $n^2 + 3n + 9$ & \cite{shanks},\cite{wash87} \\
$L_n$ & $-1$ & $-n$ & $-n^2$ & $(n^2+3)(n^2-3n+3)$ & \cite{lecacheux},\cite{wash96} \\
$K_n$ & $-n$ & $-n^2-n-1$ & $-n^3-2n^2-3n-3$ & $(n^2+3)(n^4+n^3+4n^2+3)$ & \cite{kishi} \\
$K'_n$ & $-n$ & $n^3-1$ & $-n^5+2n^2$ & $n^{10}+o(n^{10})$ & \cite{kishi}

\end{tabular}
\end{center}
\normalsize
Shanks's simplest cubic fields $S_n$ are degenerate in multiple ways. Condition \ref{miracle} gives $0/0$, but if we define $\lambda$ to be any polynomial, the machinery works. We choose $\lambda = n$ since any other choice is a subparametrization of this one. The previous section's bounds for the regulator of $K(S_n)$ also give $0/0$, but (excellent) bounds are given in \cite{shanks}.

The above table suggests that there might be a family with $f=-n^2$, and in fact there is. Take $(f,g) = (-n^2,n^3-1)$. Then $\lambda = n^3-4n$ and $P_{f,g}$ is
\begin{equation*}
B_n = X^3 + (n^7 + 2n^6 + 3n^5 - n^4 - 3n^3 - 3n^2 + 3n + 3)X^2 + (-n^4 + 3n)X - 1.
\end{equation*}
Since $\deg{f}$ and $\deg{g}$ are coprime, $B_n$ is not simply a subparametrization of $S_n, L_n,$ or $K_n$. The discriminant is the square of
\begin{equation*}
\sqrt{D_P} = (n^3+n^2-1)(n^4-3n+3)(n^4+3n^3+6n^2+6n+3).
\end{equation*}
The first factor is $g-f$, and the product of the last two is $3a+\lambda^2$. By Theorem \ref{regthm}, whenever $3a+\lambda^2$ is squarefree,
\begin{equation*}
D(B_n) = (n^4-3n+3)^2(n^4+3n^3+6n^2+6n+3)^2.
\end{equation*}
For $0<n<10^4$, it turns out that $3a+\lambda^2$ is squarefree $14.8\%$ of the time. This is perfectly fine, but we can do better. Following the discussion after Theorem \ref{regthm}, we have an algorithm that completely determines $D(B_n)$. 
Write $3a+\lambda^2 = bc^3$ with $b$ cubefree and take $p|(3a+\lambda^2)$. If $p = 3k+2$, then $p$ is unramified. For primes of the form $p=3k+1$, we need to take special care only if $p|\textrm{res}(f^3-1,g^3-1) = -5^3$, which never happens. Therefore $p=3k+1$ ramifies iff $p|b$.

We can also deal with $p=3$ (though admittedly in a more \textit{ad hoc} manner). Straightforward calculations show that
\begin{equation*}
3|D_P \iff v_3(3a+\lambda^2)>0 \iff 3|n \iff v_3(3a+\lambda^2)=2 \iff 3|b.
\end{equation*}
Thus 3 ramifies only if $3|b$. Conversely, writing out $B_{3k}(X+1)$ shows that it satisfies Eisenstein's criterion, so 3 ramifies if $3|b$. We have shown the following.
\begin{theorem}
Write $(n^4-3n+3)(n^4+3n^3+6n^2+6n+3) = bc^3$ with $b$ cubefree. Then
\begin{equation*}
D(B_n) = 81^\delta \prod_{p|b, p = 3k+1}p^2,
\end{equation*}
where $\delta = 1$ if $3|b$ and $\delta = 0$ otherwise.
\end{theorem}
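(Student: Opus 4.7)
The structure of the argument is essentially prescribed by the general discussion following Theorem \ref{regthm}; the plan is to verify, prime by prime, that each rational prime's contribution to $D(B_n)$ is exactly as claimed by the formula.

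First I would dispose of the easy primes. For a prime $p \neq 3$ to ramify in $K_{f,g}$, it must divide $3a+\lambda^2$, so I only have to examine such $p$. If $p \equiv 2 \pmod 3$, then $p$ is inert in $\mathbb{Q}(\omega)$, and writing $p^m \| \beta$ gives $p^m \| \overline{\beta}$ and hence $p^{3m} \| \alpha/(f+\omega g)^3$, a perfect cube; thus $p$ is unramified and contributes nothing. For $p \equiv 1 \pmod 3$, $p$ splits as $\pw\overline{\pw}$. The delicate case is when both $\pw$ and $\overline{\pw}$ divide $\beta$; the discussion after Theorem \ref{regthm} shows that this forces $p \mid \mathrm{res}(f^3-1,g^3-1)$. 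Here I would compute this resultant explicitly for $(f,g) = (-n^2, n^3-1)$ and observe that (as already noted in the excerpt) it equals $-5^3$. Since $5 \equiv 2 \pmod 3$, no prime $p \equiv 1 \pmod 3$ can occur in this bad case. Hence for such $p$, only $\pw$ divides $\beta$, and if $p^m \| 3a+\lambda^2$ then $\pw^{2m}\overline{\pw}^m$ is the contribution to $\alpha$; this is a removable cube precisely when $3 \mid m$. Equivalently, $p$ ramifies iff $p \mid b$, and when it does it contributes exactly $p^2$ to $D(B_n)$.

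Second, I would handle $p = 3$, which is the part demanding real work. The goal is the chain of equivalences
\begin{equation*}
3 \mid D(B_n) \iff 3 \mid n \iff 3 \mid b.
\end{equation*}
For the middle equivalence, a direct substitution into $3a+\lambda^2$ (already written out in the excerpt as the product $(n^4-3n+3)(n^4+3n^3+6n^2+6n+3)$) shows that $v_3(3a+\lambda^2) > 0$ iff $3 \mid n$, and in that case $v_3(3a+\lambda^2) = 2$, so $3 \mid b$ and $3^2 \nmid b$. Conversely, if $3 \nmid n$, then $3 \nmid 3a+\lambda^2$ and a fortiori $3 \nmid b$.

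Finally, I would verify that $3$ actually ramifies when $3 \mid n$, and computes its exact contribution. Writing $n = 3k$, I would expand $B_{3k}(X+1)$ and check that every coefficient other than the leading one is divisible by $3$, and that the constant term is divisible by $3$ but not $9$; this is Eisenstein at $3$, so $3$ is totally ramified in $K_{f,g}$, contributing exactly $3^4 = 81$ to the discriminant. When $3 \nmid n$, the general analysis of primes dividing $\sqrt{D_P}$ but not $3a+\lambda^2$ (as in the proof of Theorem \ref{regthm}) rules out ramification at $3$. The main obstacle is the Eisenstein verification: it's routine but tedious, and one must be careful that the reduction mod $9$ of the constant term really is nonzero. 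Combining the three cases yields exactly the formula in the statement.
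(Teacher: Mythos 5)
Your proposal follows the paper's argument essentially verbatim: primes $p\equiv 2\pmod 3$ are dismissed by inertness in $\mathbb{Q}(\omega)$, primes $p\equiv 1\pmod 3$ are handled via the resultant $\textrm{res}(f^3-1,g^3-1)=-5^3$ (whose only prime factor is $5\equiv 2\pmod 3$, so the bad split case never occurs), and $p=3$ is treated through the chain of equivalences passing through $3\mid n$ together with Eisenstein's criterion applied to $B_{3k}(X+1)$. The only slip is immaterial: when $3\mid n$ one has $v_3(3a+\lambda^2)=2$ and hence $v_3(b)=2$, so in fact $9\mid b$ rather than ``$3^2\nmid b$'' as you wrote, but this does not affect the conclusion since the formula only depends on whether $3\mid b$.
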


To bound the regulator, we have roots of order approximately $n^7$, $n^{-4}$, and $n^{-3}$. Numerical calculation shows that we get an index of less than 3 whenever $|n| > 4$, and computing the regulators of $K(B_n)$ for $|n|\leq 4$ individually by computer shows that we need only restrict to $n \neq -1$. (The roots of $B_{-1} = X^3 - 3X^2 - 4X - 1$ generate an index 3 subgroup of the full unit group. Since $3a+\lambda^2 = 7$ is squarefree, the restriction is in fact necessary.) We have shown the following.
\begin{theorem}
Let $n \in \mathbb{Z}$ with $n\neq -1$, and suppose $3a+\lambda^2$ is squarefree. Then $\{\pm1,\theta_1,\theta_2\}$ generate the unit group of the ring of integers of $K(B_n)$.
\end{theorem}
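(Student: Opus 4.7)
The approach is to instantiate Theorem \ref{bigthm} for the family $B_n$ (where $\deg f = 2$ and $\deg g = 3$), but to carry the bounds through effectively so that the index inequality applies from $|n| = 5$ onward, and then dispose of the remaining values of $n$ by direct computation.

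Substituting $\deg f = 2$ and $\deg g = 3$ into the formulas of Section 3 gives the asymptotic root estimates $\log|\theta_1| \approx 7\log|n|$, $\log|\theta_2| \approx -4\log|n|$, and $\log|\theta_3| \approx -3\log|n|$, hence $R_P \approx 37\log^2|n|$. Under the squarefree hypothesis, the preceding theorem yields the exact discriminant
\[
D(K(B_n)) = (n^4-3n+3)^2(n^4+3n^3+6n^2+6n+3)^2,
\]
and Cusick's bound (inequality (\ref{regbound})) then gives $R_K \geq 16\log^2|n| + o(\log^2|n|)$. Consequently $[E_K:E_P] = R_P/R_K < 37/16 + \epsilon < 3$ once $|n|$ is sufficiently large. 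The first substantive step is to make the error terms explicit and verify numerically that $[E_K:E_P] < 3$ already holds for every $|n| \geq 5$; this is precisely the ``numerical calculation'' the paper alludes to.

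Once the bound $[E_K:E_P] < 3$ is established on that range, the norm constraint from \cite[p.\ 412]{wash96} — that $[E_K:E_P]$ must be a norm from $\mathbb{Z}[\omega]$, and $2$ is not such a norm (being inert in $\mathbb{Z}[\omega]$) — forces $[E_K:E_P] = 1$ whenever $|n| \geq 5$ and $3a+\lambda^2$ is squarefree. For the integers $|n| \leq 4$ with $n \neq -1$, a short enumeration identifies the values of $n$ at which $3a+\lambda^2$ is actually squarefree, and for each such $n$ we compute the unit group of $\mathcal{O}_{K(B_n)}$ directly in PARI/GP and check that $\pm 1,\theta_1,\theta_2$ generate it. The excluded value $n = -1$ (where $3a+\lambda^2 = 7$ is squarefree but $[E_K:E_P] = 3$) cannot be ruled out by the norm argument, since $3 = \mathrm{N}(1-\omega)$ is itself a norm from $\mathbb{Z}[\omega]$; this is why it must appear as a genuine exception in the statement.

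The main obstacle is the effective index bound: Theorem \ref{bigthm} asserts the inequality only asymptotically, whereas here we need it from $|n| = 5$ onward. Converting the estimates $\log|\theta_i| = c_i\log|n| + O(1)$ and $\log D(K(B_n)) = C\log|n| + O(1)$ into explicit inequalities valid for every $|n| \geq 5$ is routine but is the one step that does more than cite earlier material; everything else is either a direct appeal to the previous theorems of this paper or a finite computer verification.
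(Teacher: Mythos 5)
Your proposal is correct and follows essentially the same route as the paper: specialize the Section 3 estimates to $\deg f=2$, $\deg g=3$ to get $R_P\approx 37\log^2|n|$ versus $R_K\gtrsim 16\log^2|n|$, verify the index bound $[E_K:E_P]<3$ effectively for $|n|\geq 5$ by numerical computation, rule out index $2$ by the norm argument, and handle $|n|\leq 4$ (isolating the genuine exception $n=-1$, where the index is $3$) by direct computer calculation. The paper's own proof is exactly this, stated more tersely in the paragraph preceding the theorem.
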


For this theorem to be useful, we'd like to know that it gives units other than those produced by $S_n$, $L_n$, or $K_n$. Consider $B_2 = X^3+309X^2-10X-1$. The regulator of $K(B_2)$ is approximately $24.733$. Equation \ref{regbound} tells us that we only need to check the regulators of  $K(L_n)$ and $K(K_n)$ for $|n|<10$. Shanks \cite{shanks} computes that the regulator of $K(S_n)$ is $\log^2{|n|} + o(1)$, which is greater than $K(B_2)$ for $|n|>150$. Checking the regulators for small $n$ in PARI/GP shows that the regulator of the units produced by $B_2$ is not a regulator produced by $S_n, L_n,$ or $K_n$. Therefore, these units are new. (We are not claiming that the field $K(B_2)$ is necessarily distinct from $K(S_n), K(L_n),$ and $K(K_n)$ for all $n$, just that the units are new. It is possible that $K(B_2)$ is somehow isomorphic to one of these fields, but that the units produced there are not fundamental.)

At this point one might suspect that these families are abundant and go hunting for $(f,g)$ with $f=-n^3$. As we've said before, Condition \ref{miracle} implies that $g|(f^3+1)$. This limits the search considerably: a given $f$ determines a finite list of possible $g$. For $f=-n^3$, the only potential candidates for $g$ are $\pm(n-1), \pm(n^2+n+1),$ and $\pm(n^6+n^3+1)$. Of these, the only pair satisfying Condition \ref{miracle} is $(-n^3,-n^6-n^3-1) = K_{n^3}$. Actually, an argument using the factorization of $n^{3k} \pm 1$ shows that this is always the case: if $k>2$, any family of the form $(\pm n^k, g)$ is a subparametrization of the families $K_n$ or $K'_n = (-n,n^3-1)$.

This raises the question of whether all families might be subparametrizations of some finite list. This isn't right either:

\begin{theorem} \label{fgtrick}
Assume $(f,g)$ satisfy Condition \ref{miracle} and $f\neq0$. Then $(g,k)$, where $k=(g^3+1)/f$, also satisfy Condition \ref{miracle}. If $\deg{f}$ and $\deg{g}$ are coprime, then $\deg{g}$ and $\deg{k}$ are coprime.
\end{theorem}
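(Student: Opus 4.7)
The plan is to first unpack Condition \ref{miracle} algebraically. Writing out $\lambda fg = f^3 + g^3 + 1$ and substituting $g^3 + 1 = kf$ (the defining identity of $k$) yields $\lambda fg = f^3 + fk$, and dividing by $f$ (legal since $f \neq 0$) gives the clean identity
\[
k = \lambda g - f^2.
\]
This simultaneously confirms $k \in \mathbb{Z}[n]$ (since $\lambda$, $f$, $g$ all are) and records the congruence $k \equiv -f^2 \pmod{g}$, which will do all the work below.

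To verify Condition \ref{miracle} for the pair $(g,k)$, the goal is to show $gk \mid g^3 + k^3 + 1$ in $\mathbb{Z}[n]$. One of the two divisibilities is immediate from $fk = g^3 + 1$: we get $g^3 + k^3 + 1 = fk + k^3 = k(f + k^2)$, so $k$ divides, and the cofactor is $f + k^2$. What remains is to show $g \mid f + k^2$. Using the congruence above, I reduce
\[
f + k^2 \equiv f + f^4 = f(1 + f^3) \pmod{g},
\]
and then invoke the fact (observed immediately after Condition \ref{miracle}) that $g \mid f^3 + 1$. Together this shows $(g^3 + k^3 + 1)/(gk) = (f + k^2)/g$ lies in $\mathbb{Z}[n]$, so $(g,k)$ satisfies Condition \ref{miracle} with new $\lambda$ equal to $(f + k^2)/g$.

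For the degree statement, the standing assumption $\deg f < \deg g$ from Section \ref{families} gives $\deg(g^3 + 1) = 3\deg g$ and hence $\deg k = 3 \deg g - \deg f$. The arithmetic identity
\[
\gcd(\deg g,\, 3 \deg g - \deg f) = \gcd(\deg g,\, \deg f)
\]
then transfers coprimality of $(\deg f, \deg g)$ directly to $(\deg g, \deg k)$.

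The only step requiring any real thought is the initial rearrangement producing $k = \lambda g - f^2$; once that is in hand every claim collapses to a one-line reduction modulo $g$ or $k$. I do not expect any obstacle from passing between $\mathbb{Z}[n]$ and $\mathbb{Q}[n]$ because each divisibility is witnessed by an explicit integral quotient, nor from special cases since $f \neq 0$ is exactly the hypothesis that licenses the key division.
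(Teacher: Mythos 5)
Your proof is correct, and it is at heart the same computation as the paper's, just organized differently. The paper substitutes $k=(g^3+1)/f$ directly and shows
\begin{equation*}
\frac{g^3+k^3+1}{gk} = \frac{f^3+(g^3+1)^2}{f^2g},
\end{equation*}
then argues that $f^2\mid (g^3+1)^2$, $g\mid(f^3+1)$, and the coprimality of $f$ and $g$ together force $f^2g$ to divide the numerator. Your quotient $(f+k^2)/g$ is literally the same rational function (substitute $k=(g^3+1)/f$), but your decomposition $g^3+k^3+1 = k(f+k^2)$ absorbs the factor of $k$ up front, so you only ever need the single divisibility $g\mid f(1+f^3)$ and never have to invoke that $f$ and $g$ are coprime. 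Your route also has the small advantage of making explicit, via $k=\lambda g - f^2$, that $k$ is itself a polynomial with integral coefficients --- a fact the paper's statement needs but leaves implicit. The degree argument is identical in both versions ($\deg k = 3\deg g - \deg f$, then $\gcd(\deg g, 3\deg g-\deg f)=\gcd(\deg g,\deg f)$). No gaps.
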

\begin{proof}
A calculation shows that
\begin{equation*}
\frac{g^3+k^3+1}{gk} = \frac{f^3+(g^3+1)^2}{f^2g}.
\end{equation*}
Since $f|(g^3+1), g|(f^3+1)$, and $f$ is coprime to $g$, we have $f^2g|(f^3+(g^3+1)^2)$. The second part of the theorem follows from the fact that $\deg{k} = 3\deg{g}-\deg{f}$.
\end{proof}

We can apply Theorem \ref{fgtrick} to $L_n$, $K_n$, or $B_n$ repeatedly to produce any number of new families. Since the construction is asymmetric in $f$ and $g$, we can also run it backwards by applying it to $(g,f)$. For example, applying the construction forwards iteratively to $B_n$ (more precisely, to $(f,g)=(-n^2,n^3-1)$) produces
\begin{align*}
(g,k_1) &= (n^3-1,-n^7+3n^4-3n), \\
(k_1,k_2) &= (-n^7+3n^4-3n,-n^{18}+o(n^{18})), \\
(k_2,k_3) &=  (-n^{18}+o(n^{18}), n^{47}+o(n^{47}), ...
\end{align*}
(Incidentally the ratio $\deg{g}/\deg{f}$ approaches $\frac{3+\sqrt{5}}{2} = \phi+1 = \phi^2$ as we iterate; this is true in general and a consequence of the degree calculation.) Running $B_n$ backwards gives $B_{-n}$. Running $L_n$ backwards gives $S_n$ (which has $f=0$, so we stop there), while running it forwards gives $K'_n$ and then new families. Running $K_n$ backwards gives the family corresponding to $(-n,n-1)$ and then $K_n$ again, while running it forwards gives new families. The families $K'_n$ and $K_{-n,n-1}$ are discussed briefly in Kishi \cite{kishi}; the rest, as far as I know, are all completely new. $K_{-n,n-1}$ is badly behaved; it has $\lambda = 3$ and always produces units of index 3 in the full unit group. (This does not contradict Theorem \ref{bigthm} since $\deg{f}=\deg{g}$.) Note that we claim only that the families themselves are new. I do not know whether these new families all produce new units, or even whether they all generate distinct fields.

Finally, we might ask if all families of cyclic cubic fields are of the form $P_{f,g}$ for the right choice of $(f,g)$. Here the answer is also negative. For instance, plotting the pairs of $(a,\lambda)$ for which the discriminant of $X^3+aX^2+\lambda X -1$ is a square reveals (among other things) a parabola in the second quadrant. Computation shows that this is the family $X^3+(-n^2+2n-6)X^2 + (n^2+5)X - 1$. The Galois group is generated by the fractional linear transformation
\begin{equation*}
\begin{bmatrix}
n^2-n+1 & -(n^2+n+1) \\
n^2-3n+3 & -2
\end{bmatrix},
\end{equation*}
which cannot possibly come from $f,g \in \mathbb{Z}[n]$. In the language of elliptic surfaces, this is the family $[n^2-n+1 : 2 : n^2+n+1;\ n^2+5]$ on the homogeneous coordinates for $X(3)$. As it happens, we know this family already. Begin with $S_n$, Shanks' simplest cubic fields. If we square the roots (which are units) we get another family $S_n^2$, and that family is the one above.

\section{Acknowledgments}
This paper comprises a portion of the author's forthcoming doctoral dissertation. The author is deeply grateful to his advisor Larry Washington for his advice and encouragement.

\bibliographystyle{plain}
\bibliography{references}
\end{document}